\newtheorem{teo}{Theorem}
\newtheorem{lem}{Lemma}
\newtheorem{exa}{Example}
\newtheorem{cor}{Corollary}
\numberwithin{equation}{section}
\title{Crossings in Randomly Embedded Graphs}
\author{Santiago Arenas-Velilla \\\small Centro de Investigaci\'on en Matem\'aticas,\\[-0.8ex]
\small Guanajuato, Gto. 36000, Mexico\\
\small\tt santiago.arenas@cimat.mx \\\and Octavio Arizmendi\\ \small Centro de Investigaci\'on en Matem\'aticas,\\[-0.8ex]
\small  Guanajuato, Gto. 36000, Mexico\\
\small\tt octavius@cimat.mx\\}
\begin{document}
\maketitle

\begin{abstract}
    {We consider the number of crossings in a graph which is embedded randomly on a convex set of points. We give an estimate to the normal distribution in Kolmogorov distance which implies a convergence rate of order $n^{-1/2}$ for various families of graphs, including random chord diagrams or full cycles.}
\end{abstract}

\textbf{keywords}: crossings, random graphs, normal approximation.
\section{Introduction}
Let $G = (V,E)$ be a graph with vertex set $V$ and edge set $E \subset V \times V$, which is embedded randomly in a convex set of points. We are interested in the random variable counting the number of crossings under this embedding.  

Formally, for a graph $G = (V,E)$ with vertex set $[n]=\{1,\ldots, n\}$, an embedding given by the permutation $\pi:[n]\to[n]$, is the graph isomorphism induced by the permutation $\pi$. The crossings of such embedding  are given by the set $\{(a,b,c,d)|\{a,b\}, \{c,d\} \in E, \pi(a)>\pi(c) >\pi(b) > \pi(d)\}$. Figure \ref{pathfig_example} shows graphical representation of a couple embeddings of a path graph $P_{20}$. The first one having $40$ crossing and the second one having $60$ crossings. 
 
\begin{figure}[ht]
\centering
\begin{subfigure}[b]{0.45\linewidth}
\includegraphics[width=\linewidth,height=200pt]{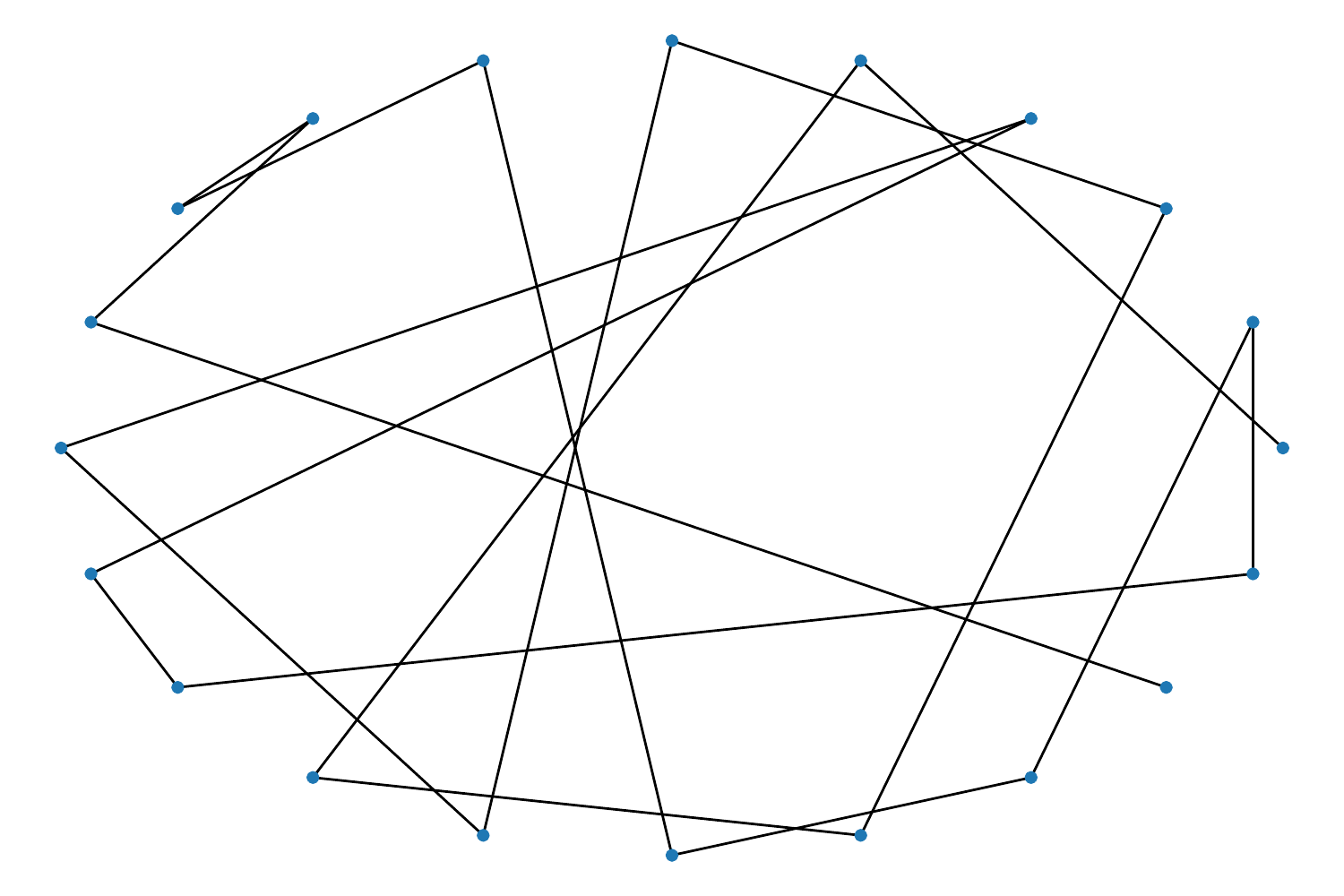}
\caption{$P_{20}$ with 40 crossings}
\label{P_297}
\end{subfigure}
\begin{subfigure}[b]{0.45\linewidth}
\includegraphics[width=\linewidth,height=200pt]{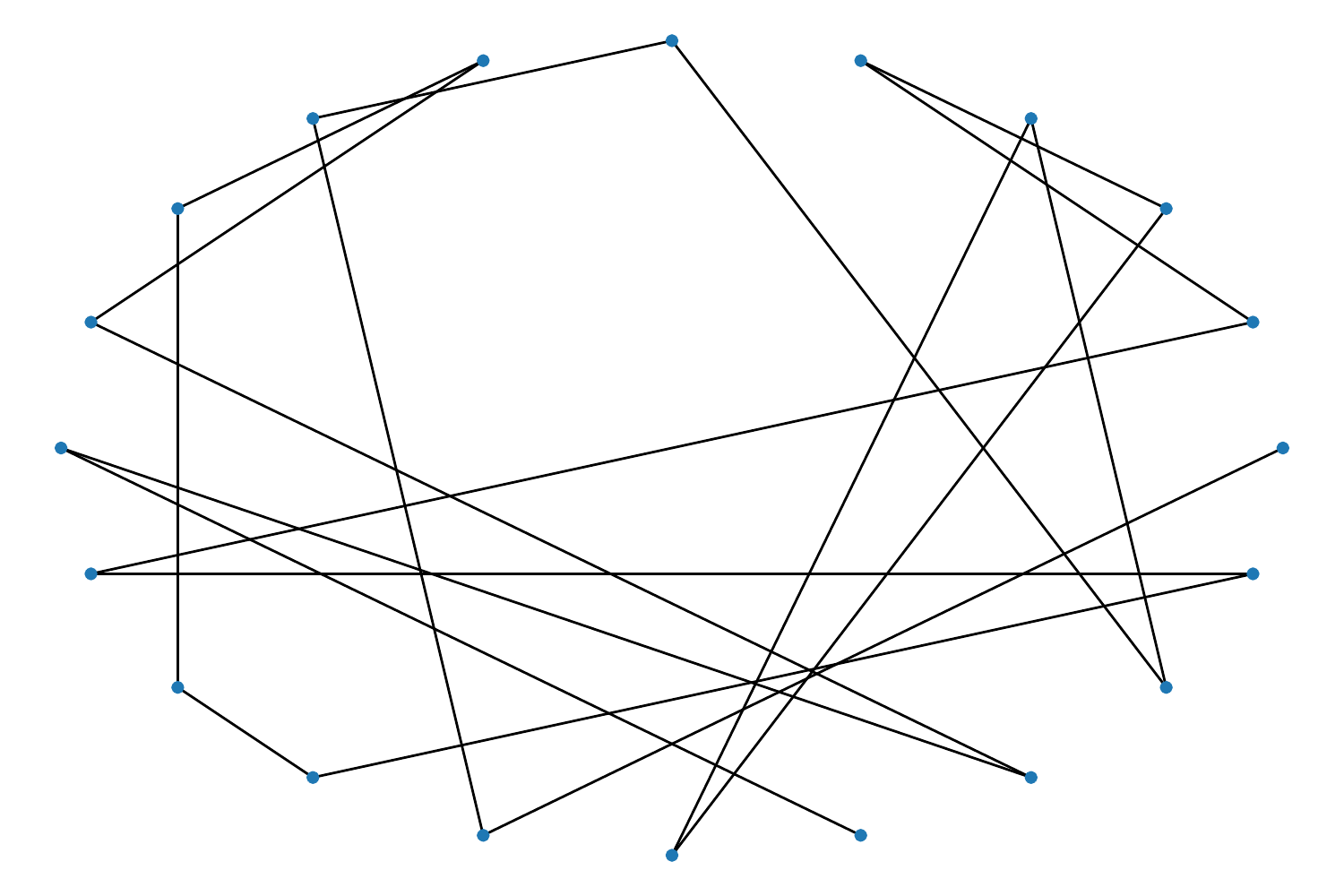}
\caption{$P_{20}$ with 60 crossings}
\label{P_383}
\end{subfigure}
\caption{Examples of an embedding of a path with 20 vertices}
\label{pathfig_example}
\end{figure}

To our best of our knowledge there is not much work about general graphs. The paper by Flajolet and Noy \cite{FN2000} considers the case where $G$ is a union of disjoint edges (is called a matching, a pairing or a chord diagram) and proves a central limit theorem. This result is also proved with the use of weighted dependency graphs, in \cite{Feray2018}. More important to us the recent paper by Paguyo \cite{paguyo2021convergence} gives a rate of convergence in that case. Another related paper is \cite{ACH}, where the authors consider a uniform random tree.

In this paper, we will show that under some asymptotic behaviour of very precise combinatorial quantities of the graph, the random variable counting the number of crossings in a random embedding approximates a normal distribution with mean $\mu$ and variance $\sigma^2$ which can  be calculated precisely (see Lemmas \ref{lem:mean} and \ref{lem:2ndmoment}). Moreover, we give a convergence rate in this limit theorem.

\begin{teo}
Let $G$ be a graph with maximum degree $\Delta$, and let $X$ be the number of crossings of a random uniform embedding of $G$. Let $\mu$ and $\sigma^2$ the mean and the variance of $X$. Then, with  $W = (X-\mu) /\sigma$,  
\begin{equation}
d_{Kol}(W,Z) \leq \frac{4 m_2(G) \Delta m}{3 \sigma^2}\left(\frac{6 \Delta m}{\sigma} + \sqrt{1- \frac{6m_4(G)}{m_2(G)^2} + \frac{m(\Delta-1)^2}{m_2(G)} } \right),
\label{kolmogorov_distance}
\end{equation}
where $m$ is the number of edges of $G$, $m_r(G)$ is the number of $r$-matching of $G$, and $Z$ is a standard Gaussian random variable.
\end{teo}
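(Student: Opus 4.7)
The starting point is the decomposition
\[
X = \sum_{\{e,f\}} \xi_{ef},
\]
where the sum ranges over the $m_2(G)$ unordered pairs of disjoint edges of $G$ and $\xi_{ef}$ indicates that the four endpoints of $e$ and $f$ interleave under $\pi$. Each $\xi_{ef}$ is Bernoulli$(1/3)$, and two indicators $\xi_{ef}$, $\xi_{gh}$ are correlated only when their underlying edges share vertices; this is the structure governing the formulas in Lemmas \ref{lem:mean} and \ref{lem:2ndmoment}.

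For the Gaussian approximation I would follow the strategy used in Paguyo's chord-diagram work and apply Stein's method with an exchangeable pair built from a uniform random transposition. Set $\pi' = \pi \circ (I,J)$ for $(I,J)$ uniform on unordered pairs in $[n]$, and let $W' = (X(\pi') - \mu)/\sigma$. By the symmetry of the uniform measure on $S_n$, the pair $(W,W')$ is exchangeable, and a short calculation should confirm the Stein linearity $\E[W - W' \mid \pi] = \lambda W$ for an explicit $\lambda$ of order $1/n$, because each indicator $\xi_{ef}$ transforms linearly under the averaged action of a random transposition.

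I would then invoke a Kolmogorov bound for exchangeable pairs (of Chatterjee / Shao--Zhang type), which controls $d_{Kol}(W,Z)$ by a term of the form $\lambda^{-1}\sqrt{\mathrm{Var}(\E[(W-W')^2 \mid \pi])}$ plus a remainder proportional to $\lambda^{-1}\|W-W'\|_\infty^3$ or $\|W-W'\|_\infty$. The uniform bound $|W - W'| \leq 6\Delta m/\sigma$ is elementary: a swap at positions $I,J$ can only flip those $\xi_{ef}$ whose support meets $\{I,J\}$, and there are at most $O(\Delta m)$ such pairs. After multiplication by $\lambda^{-1}$, this contributes the $\tfrac{6\Delta m}{\sigma}$ summand inside the parenthesis and, together with the normalization, the prefactor $\tfrac{4 m_2(G)\Delta m}{3\sigma^2}$.

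The heart of the proof is estimating $\mathrm{Var}(\E[(W-W')^2 \mid \pi])$. Expanding $(X(\pi') - X(\pi))^2$ as a sum over pairs of 2-matchings affected by the swap and averaging over $(I,J)$ produces a quadratic form in the $\xi_{ef}$; its variance reduces to counting configurations of ordered pairs of 2-matchings classified by how they overlap. Vertex-disjoint pairs, counted by $m_4(G)$, yield the negative correction $-6 m_4(G)/m_2(G)^2$ from the centering, while configurations that share a single vertex cannot be absorbed into a matching number and must be bounded by $\Delta$, producing the $m(\Delta-1)^2 / m_2(G)$ correction. The step I expect to be the main obstacle is executing this combinatorial bookkeeping cleanly---in particular, tracking the cancellations that turn an $O(m^4)$ naive estimate into the sharp square-root factor stated in the theorem.
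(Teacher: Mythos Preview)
Your proposal diverges from the paper's argument: the paper does not use exchangeable pairs but a \emph{size-bias coupling} (and it reports that Paguyo's chord-diagram argument does the same). Since $X=\sum_{i\in M_2(G)}Y_i$ with each $Y_i$ Bernoulli$(1/3)$, one selects a uniform $2$-matching $I$ and, if $I$ is not already a crossing under $\pi$, swaps two of its four endpoints to force the crossing; this produces $X^s$ with the size-bias law and $|X^s-X|\le 2\Delta m$. Plugging into Theorem~\ref{teoKolmo} reduces everything to bounding $\mathrm{Var}\bigl(\E[X^s-X\mid X]\bigr)=m_2(G)^{-2}\sum_{i,j}\mathrm{Cov}(X^{(i)}-X,X^{(j)}-X)$. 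The term $1-6m_4/m_2^2$ is precisely the fraction of ordered pairs $(i,j)$ of $2$-matchings that \emph{share a vertex}, while the $(\Delta-1)^2 m/m_2$ term bounds the fraction of \emph{vertex-disjoint} pairs whose edge-neighbourhoods nevertheless meet (for fully separated pairs, $X^{(i)}-X$ and $X^{(j)}-X$ are independent and the covariance vanishes). Your reading of where these two terms come from is therefore reversed relative to the actual mechanism.

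There is also a genuine gap in your sketch at the linearity step. For a uniform transposition $(I,J)$, the conditional expectation $\E[\xi_{ef}(\pi\circ(I,J))\mid\pi]$ is \emph{not} an affine function of $\xi_{ef}(\pi)$: when exactly one endpoint of $\{e,f\}$ is displaced, the probability that the resulting four-point configuration alternates depends on the lengths of the arcs cut out by the three stationary endpoints, not merely on whether the original configuration was a crossing. Consequently the exact regression $\E[W-W'\mid\pi]=\lambda W$ fails, and you would have to work in the approximate-linearity framework with an additional remainder to control. That route may well succeed at the level of orders of magnitude, but it is not the ``short calculation'' you anticipate and is unlikely to deliver the specific expression in~\eqref{kolmogorov_distance} with the same constants. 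The size-bias construction avoids this issue entirely: no linearity hypothesis is required, and the locality of $X^{(i)}-X$ (it depends only on edges incident to the four vertices of $i$) is exactly what makes the covariance decomposition clean.
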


Examples of families of graphs that satisfy such a normal approximation, with a rate proportional to $1/\sqrt{n}$ are pairings, cycle graphs, path graphs, union of triangles, among others. We explain in detail these examples in Section 4.

We should mention that our method of proof resembles the one used by Paguyo in \cite{paguyo2021convergence}, for the case of pairings. The main idea is to write the number of crossing as a sum of indicators variables and then consider the size biased transform in the case of sums of Bernoulli variables. However, there is a crucial difference between Paguyo's way to write such variables and how we do it, which in our opinion is more flexible. To be precise, Paguyo considers for each four points $a<b<c<d$ in the circle the indicator that there is a crossing formed by the edges $\{a,c\}$ and $\{b,d\}$. This random variable is easy to handle for the case of a pairing but for a general given graph, even calculating the probability of such indicator to be $1$ can be very complicated. Our approach instead looks at a given $2$-matching in the graph $G$ and consider the indicator random variable of this $2$-matching, when embedded randomly, to form a crossing.

\section{Preliminaries}
In this section we establish some notations for graphs and remind the reader about the main tool that we will use to quantify convergence to a normal distribution: the size bias transform.

\subsection{Notation and Definitions on Graphs}

A graph is pair $G=(V,E)$, where $E\subset\{\{v,w\}|v,w\in V\}$. Elements in $V$ are called vertices and elements in $E$ are called edges. An edge $\{v,w\}$ is sometimes written as $v \sim_G w $ or $v \sim w$ if the underlying graph $G$ is clear. The number or vertices $|V|$, will be denoted by $n$, while the number of edges, $|E|$, will be denoted be $m$.

For a vertex $v$, we say that $w$ is a neighbour of $v$, if $\{v,w\}\in E$. The number of neighbours of $v$ is called the degree of $v$, denoted by $deg(v)$. The largest degree among all vertices in a graph will be denoted by $\Delta$.

A subgraph of $G$, is a graph, $H=(W,F)$, such that $W\subset V$ and $F\subset G$. We say that $G = (V,E)$ and $G' = (V',E')$ are \textit{isomorphic} if there exist a bijection $\varphi : V \to V'$ such that $\{ u , v \} \in E $ if and only if $\{ \varphi(u) , \varphi(v) \} \in E'$, for all $u,v \in V$. 

An \textit{$r$-matching} in a graph $G$ is a set of $r$ edges in $G$, no two of which have a vertex in common. We denote by $M_r(G)$ the set $r$-matchings of $G$ and by $m_r(G)$ their cardinality. Note the $m_1=m$ corresponds to the number of edges of the graph $G$. 

\subsection{Size Bias Transform}

Let $X$ be a positive random variable with mean $\mu$ finite. We say that the random variable $X^s$ has the \textit{size bias} distribution with respect to $X$ if for all $f$ such that $\mathbb{E}[Xf(X)] < \infty$, we have 
$$\mathbb{E}[Xf(X)] = \mu \mathbb{E}[f(X^s)].$$

In the case of $X = \sum_{i=1}^{n} X_i$, with $X_i$'s positive random variables  with finite mean $\mu_i$, there is a recipe to construct $X^s$ (Proposition 3.21 from \cite{ross2011fundamentals}) from the individual size bias distributions of the summands $X_i$:

\begin{enumerate}
\item For each $i = 1, \ldots, n$, let $X_i^{s}$  having the size bias distribution with respect to $X_i$, independent of the vector $(X_j)_{j \neq i}$ and $(X_j^s)_{j\neq i}$. Given $X_i^s = x$, define the vector $(X_{j}^{(i)})_{j \neq i}$ to have the distribution of $(X_j)_{j\leq i}$ conditional to $X_i = x$.
\item Choose a random index $I$ with $\mathbb{P}(I = i) = \mu_i / \mu$, where  $\mu = \sum \mu_i$, independent of all else. 
\item Define $X^s = \sum_{j \neq I} X_{j}^{(I)} + X_I^s$.
\end{enumerate}
It is important to notice that the random variables are not necessarily independent or have the same distribution. Also, $X$ can be an infinite sum (See Proposition 2.2 from \cite{chen2011normal}). 

If $X$ is a Bernoulli random variable, we have that $X^s = 1$. Indeed, if $\mathbb{P}(X=1)=p$, $\mathbb{E}(X)=p=\mu$ and then $$\mathbb{E}[Xf(X)] =(1-p)(0f(0))+p(1f(1))=pf(1)=\mu f(1)=\mu \mathbb{E}[f(1)].$$
Therefore, we have the following corollary (Corollary 3.24 from \cite{ross2011fundamentals}) by specializing the above recipe. 
\begin{cor}
\label{corollary_sizebiasBernoulli}
Let $X_1, X_2, \ldots, X_n$ be Bernoulli random variables with parameter  $p_i$. For each $i = 1, \ldots, n$ let $(X_j^{(i)})_{j \neq i}$ having the distribution of $(X_j)_{j \neq i}$ conditional on $X_i =1$. If $X = \sum_{i=1}^n X_i$, $\mu = \mathbb{E}[X]$, and $I$ is chosen independent of all else with $\mathbb{P}(I= i) = p_i /\mu$, then $X^s = 1+\sum_{j \neq I} X_j^{(I)}$ has the size bias distribution of $X$. 
\end{cor}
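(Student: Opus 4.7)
The plan is to verify the defining property of the size bias distribution directly for the candidate $X^s = 1 + \sum_{j \neq I} X_j^{(I)}$; that is, to show that $\mathbb{E}[Xf(X)] = \mu\, \mathbb{E}[f(X^s)]$ for every bounded measurable $f$ (boundedness suffices since $X$ takes only finitely many values). This approach is self-contained and does not require invoking the general recipe at step 1--3; it only uses the two ingredients already spelled out in the preliminaries, namely (a) linearity of expectation to split $X$ into its summands, and (b) the observation that for a Bernoulli variable the size bias transform is identically $1$.

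First I would write $X = \sum_{i=1}^n X_i$ and expand
\begin{equation*}
\mathbb{E}[X f(X)] \;=\; \sum_{i=1}^n \mathbb{E}[X_i f(X)].
\end{equation*}
For each $i$, since $X_i \in \{0,1\}$ the term vanishes on $\{X_i = 0\}$, so conditioning on the value of $X_i$ gives
\begin{equation*}
\mathbb{E}[X_i f(X)] \;=\; p_i\, \mathbb{E}\!\left[f(X)\,\big|\, X_i = 1\right].
\end{equation*}
By definition of $(X_j^{(i)})_{j\ne i}$ as the conditional distribution of $(X_j)_{j\ne i}$ given $X_i = 1$, and since $X = X_i + \sum_{j\ne i} X_j$, the conditional expectation equals $\mathbb{E}[f(1 + \sum_{j\ne i} X_j^{(i)})]$.

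Summing over $i$ and factoring out $\mu = \sum_i p_i$,
\begin{equation*}
\mathbb{E}[X f(X)] \;=\; \mu \sum_{i=1}^n \frac{p_i}{\mu}\, \mathbb{E}\!\left[f\!\left(1 + \sum_{j\ne i} X_j^{(i)}\right)\right].
\end{equation*}
The sum on the right is exactly $\mathbb{E}[f(X^s)]$ when $I$ is drawn independently of everything else with $\mathbb{P}(I = i) = p_i/\mu$ and $X^s := 1 + \sum_{j\ne I} X_j^{(I)}$, which establishes the size bias identity.

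There is no genuine obstacle here; the result is really a compact unpacking of the definition. The only subtlety worth flagging is the independence/measurability bookkeeping: one must make clear that the index $I$ is independent of the whole family $\{(X_j^{(i)})_{j\ne i}\}_{i=1}^n$ so that the weighted average over $i$ really equals the expectation $\mathbb{E}[f(X^s)]$ under the mixture. An alternative, essentially equivalent route is to just invoke the general three-step recipe stated before the corollary and specialize step~1 using $X_i^s \equiv 1$ (already proved in the paragraph preceding the statement); step~2 and step~3 then yield the claimed formula verbatim. I would present the direct calculation above, which makes the logic transparent and keeps the proof a few lines long.
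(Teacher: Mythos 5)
Your direct verification is correct and complete: expanding $\mathbb{E}[Xf(X)]$ by linearity, using $\mathbb{E}[X_i f(X)] = p_i\,\mathbb{E}[f(X)\mid X_i=1]$ for the Bernoulli summands, identifying the conditional law with that of $1+\sum_{j\ne i}X_j^{(i)}$, and then averaging over the independently chosen index $I$ reproduces exactly the defining identity $\mathbb{E}[Xf(X)]=\mu\,\mathbb{E}[f(X^s)]$. The paper, however, does not argue this way: it offers no self-contained proof at all, but obtains the corollary by citing the general three-step size-bias construction for sums (Proposition 3.21 of the Ross reference) and specializing step~1 with the observation, proved just before the statement, that the size-bias transform of a Bernoulli variable is the constant $1$ --- the route you mention only as an alternative. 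The trade-off is clear: the paper's specialization is shorter and leans on an established general result, while your computation is more elementary and self-contained, effectively re-proving the relevant instance of that general recipe from the definition; it also makes transparent exactly where the independence of $I$ from the family $\{(X_j^{(i)})_{j\ne i}\}_i$ is used, a point the cited recipe handles implicitly. Either presentation is acceptable; yours has the pedagogical advantage of not requiring the reader to trust the unproved general construction.
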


The following result (Theorem 5.3 from \cite{chen2011normal}) gives us bounds for the Kolmogorov distance, in the case that a bounded size bias coupling exists. This distance is given by
$$d_{Kol}(X,Y) := \sup_{z \in \mathbb{R}} |F_X(z) - F_Y(z)|,$$
where $F_X$ and $F_Y$ are the distribution functions of the random variables $X$ and $Y$.
\begin{teo}\label{teoKolmo}
Let $X$ be a non negative random variable with finite mean $\mu$ and finite, positive variance $\sigma^2$, and suppose $X^s$, have the size bias distribution of $X$, may be coupled to $X$ so that $|X^s-X| \leq A$, for some $A$. Then with $W = (X- \mu)/ \sigma$,

\begin{equation}\label{Kolmogorov}
d_{Kol}(W,Z) \leq   \frac{6 \mu A^2}{\sigma^3} +\frac{2 \mu \Psi}{\sigma^2},
\end{equation}
where $Z$ is a standard Gaussian random variable, and $\Psi$ is given by 
\begin{equation}\label{Psi}
\Psi = \sqrt{\mathrm{Var}(\mathbb{E}[X^s-X | X])}
\end{equation}

\end{teo}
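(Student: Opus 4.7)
The proof plan is to combine Stein's method with the size-bias identity. For each $z \in \mathbb{R}$, I would introduce Stein's equation
$$f'(w) - w f(w) = \mathbb{1}_{w \leq z} - \Phi(z),$$
where $\Phi$ denotes the standard normal CDF. Its unique bounded solution $f_z$ satisfies the classical estimates $\|f_z\|_\infty \leq \sqrt{2\pi}/4$, $\|f_z'\|_\infty \leq 1$, and $|(w f_z(w))'| \leq 1$. Since $\mathbb{P}(W \leq z) - \Phi(z) = \mathbb{E}[f_z'(W) - W f_z(W)]$, it suffices to bound this quantity uniformly in $z$.

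The next step is to absorb the factor $\mathbb{E}[Wf_z(W)]$ using the defining property of $X^s$. Taking $g(x) = f_z((x-\mu)/\sigma)$ in $\mathbb{E}[Xg(X)] = \mu\mathbb{E}[g(X^s)]$ gives
$$\sigma\,\mathbb{E}[W f_z(W)] = \mu\,\mathbb{E}[f_z(W^s) - f_z(W)], \qquad W^s := (X^s - \mu)/\sigma,$$
so the Stein functional rewrites as $\mathbb{E}[f_z'(W)] - (\mu/\sigma)\,\mathbb{E}[f_z(W^s) - f_z(W)]$. Using the coupling bound $|W^s - W| \leq A/\sigma$, I would write $f_z(W^s) - f_z(W) = \int_0^{W^s - W} f_z'(W+t)\, dt$ and split the integrand as $f_z'(W) + (f_z'(W+t) - f_z'(W))$, yielding a main term and a remainder.

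For the main term, combined with the identity $\mu\,\mathbb{E}[X^s - X] = \sigma^2$ (obtained by taking $f(x) = x - \mu$ in the size-bias identity and noting that $\mathbb{E}[X(X-\mu)] = \sigma^2$), I obtain after conditioning on $X$ the expression
$$-\frac{\mu}{\sigma^2}\,\mathbb{E}\Bigl[f_z'(W)\bigl(\mathbb{E}[X^s - X \mid X] - \mathbb{E}[X^s - X]\bigr)\Bigr].$$
An application of Cauchy--Schwarz together with $\|f_z'\|_\infty \leq 1$ bounds this by $\mu\Psi/\sigma^2$, which (with an extra factor absorbed during the bootstrap below) produces the $2\mu\Psi/\sigma^2$ contribution in \eqref{Kolmogorov}.

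The main obstacle is the remainder involving $f_z'(W+t) - f_z'(W)$. The function $f_z'$ inherits a jump discontinuity at $z$ from the indicator in Stein's equation, so a naive Lipschitz bound is unavailable. The standard resolution, which I would follow, is to control the jump contribution by $\mathbb{P}(|W - z| \leq A/\sigma)$ and to derive an anti-concentration estimate in a self-referential fashion: one bounds $\mathbb{P}(|W - z| \leq A/\sigma)$ by $d_{Kol}(W,Z) + 2(A/\sigma)\|\phi\|_\infty$ using the normal anti-concentration, substitutes the resulting inequality back into the Stein estimate, and absorbs the extra $d_{Kol}$ on the left-hand side. This bootstrap yields the $6\mu A^2/\sigma^3$ term and pins down the numerical constants in \eqref{Kolmogorov}.
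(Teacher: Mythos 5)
The paper does not actually prove this statement: it is quoted verbatim as Theorem 5.3 of the cited monograph of Chen, Goldstein and Shao, so your attempt has to be measured against the standard proof there. Your setup is the right one --- Stein's equation, the substitution $g(x)=f_z((x-\mu)/\sigma)$ into the size-bias identity, the identity $\mu\,\mathbb{E}[X^s-X]=\sigma^2$, and Cauchy--Schwarz on the centered conditional expectation to produce the $\mu\Psi/\sigma^2$ term --- and that part of the argument is sound. The gap is in the last step, and it is fatal as written.

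The self-referential bootstrap cannot close for size-bias couplings. Writing $f_z'(w)=wf_z(w)+\mathbb{1}_{w\le z}-\Phi(z)$, the jump part of your remainder is bounded by $\frac{\mu}{\sigma}\cdot\frac{A}{\sigma}\,\mathbb{P}(|W-z|\le A/\sigma)$, and substituting $\mathbb{P}(|W-z|\le A/\sigma)\le 2\,d_{Kol}(W,Z)+\tfrac{2A}{\sigma\sqrt{2\pi}}$ (note the factor $2$ on $d_{Kol}$, which you dropped) leaves a term $\tfrac{2\mu A}{\sigma^2}\,d_{Kol}(W,Z)$ on the right-hand side. But the size-bias identity with $f(x)=x$ gives $\sigma^2=\mu\,\mathbb{E}[X^s-X]\le\mu A$, so $\mu A/\sigma^2\ge 1$ \emph{always}, and the coefficient of $d_{Kol}(W,Z)$ is at least $2$: there is no regime in which it can be absorbed into the left-hand side. (This is exactly the opposite of the situations, e.g.\ certain exchangeable-pair arguments, where the bootstrap coefficient can be made small.) The actual proof replaces this step by a \emph{concentration inequality derived from the coupling itself}: one applies the size-bias identity $\mathbb{E}[Xf(X)]=\mu\mathbb{E}[f(X^s)]$ to a suitable piecewise-linear $f$ supported near the interval in question, which bounds $\mathbb{P}(a\le W\le b)$ (or the corresponding probability for $W^s$) by $C\sigma(b-a)/(\mu A)+C'A/\sigma$ without any reference to the normal law or to $d_{Kol}$ itself; feeding that into the remainder produces the $6\mu A^2/\sigma^3$ term. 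Without this ingredient your argument does not yield inequality \eqref{Kolmogorov}.
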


\section{Mean and Variance}

Let $S_n$ be the set of permutation of $n$ elements. For a permutation $\pi \in S_n$, and a graph $G$, let $G_\pi$ be the graph whose edges are given by 
$$v \sim _{G} w \Longleftrightarrow \pi(v) \sim_{G_\pi} \pi(w), \qquad \forall v,w \in V.$$
For a random uniform permutation $\pi$, let $X := X(G_\pi)$ be the random variable that counts the number of crossings of $G_\pi$, that is 
\begin{equation} X= \sum_{i \in M_2(G)} \mathbb{I}_{\{i \text{ is a crossing in } G_\pi \}} = \sum_{i \in M_2(G)} Y_i\end{equation}
where $M_2(G)$ is the set of 2-matching of $G$. 

In this section we give a formula for the mean and variance of the random variable $X$ in terms of the number of subgraphs of certain type.

\begin{lem}\label{lem:mean}For a graph $G$, if $X$ denote the number of crossings in a random embedding on a set of $n$ points in convex position, then its expectation is given by
$$\mu := \mu(G) =\mathbb{E}(X)=\frac{1}{3}m_2(G),$$
where $m_2(G)$ denotes the number of 2-matching of $G$.
\end{lem}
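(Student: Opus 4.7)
The plan is to apply linearity of expectation to the decomposition $X = \sum_{i \in M_2(G)} Y_i$ and show that each indicator $Y_i$ has expectation exactly $1/3$, independently of which $2$-matching $i$ we pick. This will immediately give $\mathbb{E}(X) = \frac{1}{3} m_2(G)$.

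First I would fix a $2$-matching $i = \{\{a,b\},\{c,d\}\}$ with $a,b,c,d$ four distinct vertices of $G$, and analyze $\mathbb{P}(Y_i = 1)$ under a uniformly random permutation $\pi \in S_n$. The key observation is that whether the two embedded edges cross depends only on the cyclic/linear order of the four values $\pi(a),\pi(b),\pi(c),\pi(d)$. By symmetry of the uniform distribution on $S_n$, if we condition on the underlying $4$-element subset $\{\pi(a),\pi(b),\pi(c),\pi(d)\} \subset [n]$, all $4! = 24$ assignments of these four positions to the labels $a,b,c,d$ are equally likely.

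Next I would count how many of these $24$ assignments yield a crossing. Equivalently, once four positions in convex position are fixed, the pair of edges $\{a,b\},\{c,d\}$ induces one of $\binom{4}{2}/2 \cdot \text{(labelings)}$ perfect matchings on those four points. There are exactly three perfect matchings of four points in convex position, and precisely one of them, the one whose two chords interleave, forms a crossing. Each of these three matchings corresponds to $24/3 = 8$ of the assignments, so exactly $8$ out of $24$ assignments give $Y_i = 1$, yielding $\mathbb{P}(Y_i = 1) = 1/3$. Since this probability does not depend on the choice of the $4$-subset of positions, the unconditional probability is also $1/3$.

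Finally, summing over all $2$-matchings via linearity,
\begin{equation*}
\mathbb{E}(X) = \sum_{i \in M_2(G)} \mathbb{P}(Y_i = 1) = \frac{1}{3} |M_2(G)| = \frac{1}{3} m_2(G).
\end{equation*}
The main (minor) obstacle is simply to make precise the symmetry argument reducing the computation to counting perfect matchings of four points in convex position; everything else is bookkeeping.
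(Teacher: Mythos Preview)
Your proposal is correct and follows essentially the same approach as the paper: both write $X$ as a sum of indicators $Y_i$ over $2$-matchings, use symmetry of the uniform permutation to reduce $\mathbb{P}(Y_i=1)$ to a count over the relative orderings of four points, and obtain $1/3$. The only cosmetic difference is that the paper counts $6$ cyclic orders (two of which cross) while you count $24$ labelings via the three perfect matchings of four convex points; these are equivalent formulations of the same symmetry argument.
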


\begin{proof}
For each $i\in M_2(G)$, we notice that $Y_i \sim \mathrm{Bernoulli}(1/3)$. Indeed, if $i$ consist of two edges $\{v_1,v_2\}$  and $\{v_3,v_4\}$, the probability to obtain a crossing only depends on the cyclic orders in which $v_1,v_2,v_3$ and $v_4$ are embedded in $\{1,\dots,n\}$, not in the precise position of them. From the $6$ possible orders, only $1/3$ of them yield a crossing. See Figure \ref{six_orders}, for the $6$ possible cyclic orders of $v_1,v_2,v_3$ and $v_4$. 

\begin{figure}[ht]
\centering 
\begin{tikzpicture}[scale = 0.9]

\draw (0,0) node[black][left] {$v_1$} -- (1,0) ;
\filldraw (0,0) circle (1.5pt);
\filldraw (1,0) node[black][right]{$v_2$} circle (1.5pt);
\draw (0,-1) node[black][left] {$v_4$} -- (1,-1) ;
\filldraw (0,-1) circle (1.5pt);
\filldraw (1,-1) node[black][right]{$v_3$} circle (1.5pt);

\draw (3,0) node[black][left] {$v_1$} -- (4,0) ;
\filldraw (3,0) circle (1.5pt);
\filldraw (4,0) node[black][right]{$v_2$} circle (1.5pt);
\draw (3,-1) node[black][left] {$v_3$} -- (4,-1) ;
\filldraw (3,-1) circle (1.5pt);
\filldraw (4,-1) node[black][right]{$v_4$} circle (1.5pt);

\draw (6,0) node[black][left] {$v_1$} -- (7,-1) ;
\filldraw (6,0) circle (1.5pt);
\filldraw (7,-1) node[black][right]{$v_2$} circle (1.5pt);
\draw (6,-1) node[black][left] {$v_4$} -- (7,0) ;
\filldraw (6,-1) circle (1.5pt);
\filldraw (7,0) node[black][right]{$v_3$} circle (1.5pt);

\draw (9,0) node[black][left] {$v_1$} -- (10,-1) ;
\filldraw (9,0) circle (1.5pt);
\filldraw (10,-1) node[black][right]{$v_2$} circle (1.5pt);
\draw (9,-1) node[black][left] {$v_3$} -- (10,0) ;
\filldraw (9,-1) circle (1.5pt);
\filldraw (10,0) node[black][right]{$v_4$} circle (1.5pt);

\draw (12,0) node[black][left] {$v_1$} -- (12,-1) ;
\filldraw (12,0) circle (1.5pt);
\filldraw (12,-1) node[black][left]{$v_2$} circle (1.5pt);
\draw (13,0) node[black][right] {$v_3$} -- (13,-1) ;
\filldraw (13,0) circle (1.5pt);
\filldraw (13,-1) node[black][right]{$v_4$} circle (1.5pt);

\draw (15,0) node[black][left] {$v_1$} -- (15,-1) ;
\filldraw (15,0) circle (1.5pt);
\filldraw (15,-1) node[black][left]{$v_2$} circle (1.5pt);
\draw (16,0) node[black][right] {$v_4$} -- (16,-1) ;
\filldraw (16,0) circle (1.5pt);
\filldraw (16,-1) node[black][right]{$v_3$} circle (1.5pt);

\end{tikzpicture}
\caption{Possibles cyclic orders for a $2$-matching.}
\label{six_orders}

\end{figure}
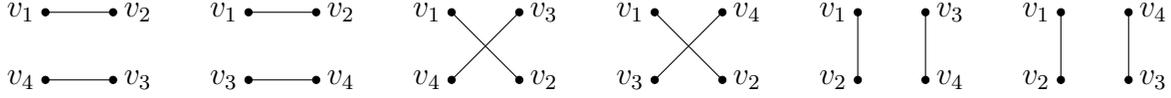

Summing over all $j$, the expected value of $X$ is 
$$\mathbb{E}X = \sum_{i \in M_2(G)} \mathbb{P}(i \text{ is a crossing}) = \frac{1}{3} m_2(G),$$
as desired.
\end{proof}

For the the second moment it is necessary  to expand $X^2$ to get
$$\mathbb{E}X^2 = \mathbb{E} \sum_{i,j \in M_2(G)} \mathbb{I}_{\{ i,j \text{ are crossings}\}} = \sum_{i,j \in M_2(G)} \mathbb{P}(i,j \text{ are crossings}).$$
The analysis for $\mathbb{E}X^2$  is significantly more complicated, since $\mathbb{P}(i,j \text{ are crossings})$ depends of how many edges and vertices the two $2$-matchings, $i$ and $j$, share.  Thus
the  previous sum can be divided in $8$ types, depending of how the $2$-matchings, $i$ and $j$, share edges and vertices as is shown in Figure \ref{double_sum}. We call such different configuration the ``kind of pair of  $2$-matching" .

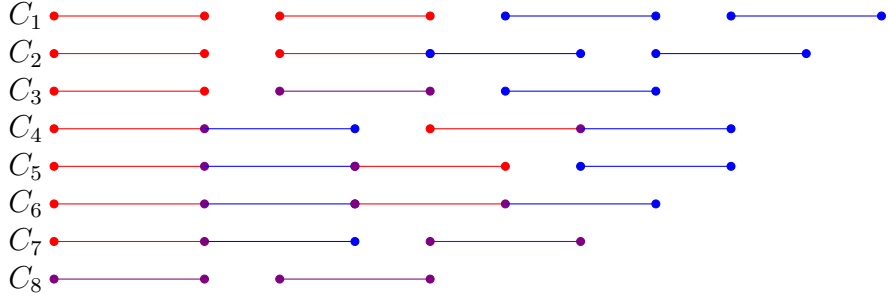
\begin{figure}[ht]
\centering 
\begin{tikzpicture}[scale = 1]
\draw [red] (0,0) node[black][left] {$C_1$} -- (2,0) ;
\filldraw [red] (0,0) circle (1.5pt);
\filldraw [red] (2,0) circle (1.5pt);
\draw [red] (3,0) -- (5,0);
\filldraw [red] (3,0) circle (1.5pt);
\filldraw [red] (5,0) circle (1.5pt);
\draw [blue](6,0) -- (8,0);
\filldraw [blue] (6,0) circle (1.5pt);
\filldraw [blue] (8,0) circle (1.5pt);
\draw [blue](9,0) -- (11,0);
\filldraw [blue] (9,0) circle (1.5pt);
\filldraw [blue] (11,0) circle (1.5pt);

\draw [red] (0,-0.5) node[black][left] {$C_2$} -- (2,-0.5);
\filldraw [red] (0,-0.5) circle (1.5pt);
\filldraw [red] (2,-0.5) circle (1.5pt);
\draw [red] (3,-0.5) -- (5,-0.5);
\filldraw [red] (3,-0.5) circle (1.5pt);
\filldraw [red] (5,-0.5) circle (1.5pt);
\draw [blue](5,-0.5) -- (7,-0.5);
\filldraw [blue] (5,-0.5) circle (1.5pt);
\filldraw [blue] (7,-0.5) circle (1.5pt);
\draw [blue](8,-0.5) -- (10,-0.5);
\filldraw [blue] (8,-0.5) circle (1.5pt);
\filldraw [blue] (10,-0.5) circle (1.5pt);

\draw [red] (0,-1) node[black][left] {$C_3$} -- (2,-1);
\filldraw [red] (0,-1) circle (1.5pt);
\filldraw [red] (2,-1) circle (1.5pt);
\draw [violet] (3,-1) -- (5,-1);
\filldraw [violet] (3,-1) circle (1.5pt);
\filldraw [violet] (5,-1) circle (1.5pt);
\draw [blue](6,-1) -- (8,-1);
\filldraw [blue] (6,-1) circle (1.5pt);
\filldraw [blue] (8,-1) circle (1.5pt);

\draw [red] (0,-1.5) node[black][left] {$C_4$} -- (2,-1.5);
\filldraw [red] (0,-1.5) circle (1.5pt);
\filldraw [violet] (2,-1.5) circle (1.5pt);
\draw [blue](2,-1.5) -- (4,-1.5);
\filldraw [blue] (4,-1.5) circle (1.5pt);
\draw [red] (5,-1.5) -- (7,-1.5);
\filldraw [red] (5,-1.5) circle (1.5pt);
\filldraw [violet] (7,-1.5) circle (1.5pt);
\draw [blue](7,-1.5) -- (9,-1.5);
\filldraw [blue] (9,-1.5) circle (1.5pt);

\draw [red] (0,-2) node[black][left] {$C_5$}-- (2,-2);
\filldraw [red] (0,-2) circle (1.5pt);
\filldraw [violet] (2,-2) circle (1.5pt);
\draw [blue](2,-2) -- (4,-2);
\filldraw [blue] (4,-2) circle (1.5pt);
\draw [red] (4,-2) -- (6,-2);
\filldraw [red] (6,-2) circle (1.5pt);
\filldraw [violet] (4,-2) circle (1.5pt);
\draw [blue](7,-2) -- (9,-2);
\filldraw [blue] (9,-2) circle (1.5pt);
\filldraw [blue] (7,-2) circle (1.5pt);

\draw [red] (0,-2.5) node[black][left] {$C_6$}-- (2,-2.5);
\filldraw [red] (0,-2.5) circle (1.5pt);
\filldraw [violet] (2,-2.5) circle (1.5pt);
\draw [blue](2,-2.5) -- (4,-2.5);
\filldraw [blue] (4,-2.5) circle (1.5pt);
\draw [red] (4,-2.5) -- (6,-2.5);
\filldraw [violet] (6,-2.5) circle (1.5pt);
\filldraw [violet] (4,-2.5) circle (1.5pt);
\draw [blue](6,-2.5) -- (8,-2.5);
\filldraw [blue] (8,-2.5) circle (1.5pt);

\draw [red] (0,-3) node[black][left] {$C_7$} -- (2,-3);
\filldraw [red] (0,-3) circle (1.5pt);
\filldraw [violet] (2,-3) circle (1.5pt);
\draw [blue](2,-3) -- (4,-3);
\filldraw [blue] (4,-3) circle (1.5pt);
\draw [violet] (5,-3) -- (7,-3);
\filldraw [violet] (5,-3) circle (1.5pt);
\filldraw [violet] (7,-3) circle (1.5pt);

\draw [violet] (0,-3.5) node[black][left] {$C_8$} -- (2,-3.5);
\filldraw [violet] (0,-3.5) circle (1.5pt);
\filldraw [violet] (2,-3.5) circle (1.5pt);
\draw [violet] (3,-3.5) -- (5,-3.5);
\filldraw [violet] (3,-3.5) circle (1.5pt);
\filldraw [violet] (5,-3.5) circle (1.5pt);

\end{tikzpicture}

\caption{Kinds of pair of $2$-matchings in the sum of the second moment of $X$.}
\label{double_sum}
\end{figure}
The probabilities of that both $i$ and $j$ are crossing for each type of double $2$-matching are the following (with the obvious abuse of notation):
\begin{eqnarray*}
&&\mathbb{P}(C_1) = \frac{1}{9}, \quad \mathbb{P}(C_2)  = \frac{1}{9} , \quad \mathbb{P}(C_3) = \frac{2}{15}, \quad \mathbb{P}(C_4) = \frac{7}{60},\\
&&\mathbb{P}(C_5) = \frac{1}{10} , \quad \mathbb{P}(C_6) = \frac{1}{12}, \quad \mathbb{P}(C_7) = \frac{1}{6}, \quad \mathbb{P}(C_8) = \frac{1}{3}.
\end{eqnarray*}

Thus, we arrive to the following. 
\begin{lem} \label{lem:2ndmoment}
The second moment of $X$ is given by the formula,
\begin{equation}\label{EXX}
\mathbb{E}X^2= \frac{6}{9}m_4(G)+\frac{4}{5}m_3(G)+\frac{1}{3}m_2(G)+\frac{4}{9}S_2+ \frac{7}{15}S_4+\frac{1}{5}S_5+\frac{1}{6}S_6+ \frac{1}{3}S_7    
\end{equation}
where $S_i$ is the number of subgraphs of $G$ of  type $C_i$.
\end{lem}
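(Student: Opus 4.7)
The plan is to start from the expansion
$$\mathbb{E} X^2 = \sum_{i,j \in M_2(G)} \mathbb{P}\bigl(i \text{ and } j \text{ are both crossings in } G_\pi\bigr),$$
and partition the double sum according to the isomorphism type of the \emph{labelled} subgraph $i \cup j$, where the labels record which edges belong to $i$ and which to $j$. As depicted in Figure~\ref{double_sum}, the pair $(i,j)$ falls into exactly one of the eight types $C_1,\dots,C_8$, distinguished by how many vertices and edges the two $2$-matchings share.

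First I would verify the eight conditional probabilities $\mathbb{P}(C_k)$ listed before the statement of the lemma. By the same reasoning as in Lemma~\ref{lem:mean}, the event that two specific $2$-matchings are simultaneously crossings depends only on the induced cyclic order of the vertices in $i\cup j$, so each $\mathbb{P}(C_k)$ reduces to enumerating favourable cyclic orderings of at most eight points. In the two extreme cases this is immediate: $\mathbb{P}(C_1) = (1/3)^2 = 1/9$ by independence of crossings on disjoint vertex sets, and $\mathbb{P}(C_8) = 1/3$ since $Y_i^2 = Y_i$; the intermediate values $2/15,\,7/60,\,1/10,\,1/12,\,1/6$ are obtained case by case.

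Next I would count, for each $k$, the number $N_k$ of ordered pairs $(i,j) \in M_2(G)^2$ realizing the configuration $C_k$. The configurations whose underlying subgraph is itself a matching are counted directly through $m_r(G)$: every $4$-matching splits into disjoint ordered pairs of $2$-matchings in $\binom{4}{2} = 6$ ways, so $N_1 = 6\, m_4(G)$; every $3$-matching gives $3 \cdot 2 = 6$ ordered pairs of $2$-matchings that share exactly one edge, so $N_3 = 6\, m_3(G)$; and $N_8 = m_2(G)$ corresponds to $i=j$. For the five remaining types, let $S_k$ denote the number of subgraphs of $G$ isomorphic to the shape of $C_k$. A short case analysis on the possible $2$-matchings inside each shape, followed by determining which pairs of them cover all edges exactly once, yields decomposition multiplicities $4,\,4,\,2,\,2,\,2$ for $k = 2,\,4,\,5,\,6,\,7$ respectively, so that $N_k$ is $S_k$ times the corresponding multiplicity.

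Combining the probabilities with the counts gives
$$\mathbb{E} X^2 = \tfrac{1}{9}(6 m_4(G)) + \tfrac{2}{15}(6 m_3(G)) + \tfrac{1}{3} m_2(G) + \tfrac{1}{9}(4 S_2) + \tfrac{7}{60}(4 S_4) + \tfrac{1}{10}(2 S_5) + \tfrac{1}{12}(2 S_6) + \tfrac{1}{6}(2 S_7),$$
which simplifies to the identity claimed in the lemma. The main obstacle is the bookkeeping rather than any single conceptual difficulty: one must identify the eight labelled shapes in Figure~\ref{double_sum} unambiguously, determine the decomposition multiplicities $4,4,2,2,2$ correctly by listing the $2$-matchings inside each shape, and verify the six non-trivial conditional probabilities by direct enumeration of cyclic orderings of six to eight points.
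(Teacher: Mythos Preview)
Your proposal is correct and follows exactly the approach the paper takes: expand the square, classify ordered pairs $(i,j)$ into the eight types $C_1,\dots,C_8$, multiply each probability $\mathbb{P}(C_k)$ by the appropriate multiplicity, and simplify. Your decomposition multiplicities $6,4,6,4,2,2,2,1$ and probabilities agree with the paper's implicit computation (note only that $\mathbb{P}(C_2)=1/9$ should be listed among the cases to verify, since the two $2$-matchings there share a vertex and the independence argument for $C_1$ does not apply verbatim).
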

Before proving our main result we will apply the above lemmas for a few examples.
\begin{exa}[Pairing]\label{pairing_example}
Consider $G$ to be a disjoint union of $n$ copies of $K_2$ graphs. The expectation is given by
$$\mathbb{E}X=\frac{1}{3} m_2(G)=\frac{n(n-1)}{6}.$$
For the variance, we only need to consider, $m_2(G)$, $m_4(G)$ and $m_3(G)$, since the other types of subgraphs are not present in $G$. The number of $r$-matchings is given by  $m_r(G)=\binom{n}{r}$, since any choice of $r$ different edges is an $r$-matching, then we obtain  $$\mathbb{E}X^2=\frac{6}{9}\binom{n}{4}+\frac{12}{15}\binom{n}{3}+\frac{1}{3}\binom{n}{2}=\frac{n^4}{36} - \frac{n^3}{30} + \frac{13 n^2}{180} - \frac{n}{15},$$
and thus the variance is 
$$\mathrm{Var}(X)=\mathbb{E}X^2-(\mathbb{E}X)^2=\frac{n(n-1)(n+3)}{45}.$$
\end{exa}
\begin{exa}[Path] \label{path_example}
Let $P_n$ be the path graph on $n$ vertices. In this case, the number of $r$-matchings is $m_r(P_n) = \binom{n-r}{r}$, so we obtain that 
\begin{eqnarray*}
&& m_4= \binom{n-4}{4},      \quad  m_3=\binom{n-3}{3},  \quad m_2=\binom{n-2}{2},  \quad S_2= 3 \binom{n-4}{3},  \\
&&  S_4 = \binom{n-4}{2}, \quad S_5 = 2\binom{n-4}{2}, \quad S_6 = n-4, \quad S_7 = 2 \binom{n-3}{2}.
\end{eqnarray*}
Then, 
$$\mathbb{E}X^2=\frac{n^4}{36}- \frac{23n^3}{90} +  \frac{35n^2}{36}-\frac{86 n}{45}- \frac{5}{3},$$
and thus the variance is given by
$$\mathrm{Var}(X) = \mathbb{E}X^2 -(\mathbb{E}X)^2 = \frac{n^3}{45}-\frac{n^2}{18}-\frac{11n}{45}+\frac{2}{3}.$$
\end{exa}
\begin{exa}[Cycle] \label{cycle_example}
Let $C_n$ be the cycle graph on $n$ vertices. The number of $r$-matchings is given by $m_r(C_n)=\frac{n}{r}\binom{n-r-1}{r-1}$, then
$$\mathbb{E}X=\frac{m_2}{3}=\frac{n(n-3)}{6}.$$
On the other hand, the number of subgraphs is given by
\begin{eqnarray*}
&& m_4= \frac{n}{4}\binom{n-5}{3},      \quad  m_3=\frac{n}{3}\binom{n-4}{2},  \quad m_2=\frac{n(n-3)}{2},  \quad S_2= n \binom{n-5}{2},  \\ &&
 S_4 =\frac{n(n-5)}{2}, \quad S_5 = n(n-5), \quad S_6 = n, \quad S_7 = n(n-4),
\end{eqnarray*}
from where the second moment is 
$$\mathbb{E}X^2=\frac{n^4}{36} - \frac{13 n^3}{90} + \frac{47 n^2}{180} - \frac{n}{3},$$
and thus the variance is given by
$$\mathrm{Var}(X) = \mathbb{E}X^2 -(\mathbb{E}X)^2 = \frac{n^3}{45}-\frac{ n^2}{90} - \frac{n}{3}.$$
\end{exa}
\begin{exa}[Triangles] \label{triangle_example}
Let $G$ be the disjoint union of $n$ copies of $K_3$.  In this case, the subgraphs of type $S_5$ and $S_6$ are not present in $G$. In order to obtain an $r$-matching for $r \geq 2$, we need to choose $r$ triangles and for each one we have 3 options to form the matching, so the number of $r$-matching is $m_r(G)=3^r \binom{n}{r}$. Similarly, the number of other types of subgraphs is given by
\begin{eqnarray*}
&& m_4 = 3^4 \binom{n}{4},    \quad m_3 = 3^3 \binom{n}{3}, \quad m_2 = 3^2 \binom{n}{2},   \\
&& S_2 = 3^4 \binom{n}{3},  \quad S_4  = 3^2 \binom{n}{2},  \quad S_7 = 2 \cdot 3^2 \binom{n}{2}.
\end{eqnarray*}
Then, the expectation and the second moment are 
$$\mathbb{E}X=\frac{1}{3} m_2(G)=\frac{3n(n-1)}{2}, \qquad \mathbb{E}X^2 =  \frac{9 n^4}{4} - \frac{39n^3}{10} + \frac{51n^2}{20} - \frac{9n}{10},$$
and thus the variance is 
$$\mathrm{Var}(X) = \mathbb{E}X^2 - (\mathbb{E}X)^2= \frac{3n^3}{5}+\frac{3n^2}{10}-\frac{9n}{10}.$$
\end{exa}

\section{Proof of the Main Theorem}

\subsection{Construction of Size Bias Transform}

Let $\pi$ be a fixed permutation and let $I = (e,f)$ be a random index chosen with probability $\mathbb{P}(I = i) = 1/ m_2(G)$ , which corresponds to a 2-matching (in this way $e,f$ are edges), and which is independent of $\pi$. For said fixed $\pi$, we have a fit $G_\pi$. We construct $\pi^s$ as follows
\begin{itemize}
    \item If $\pi$ is such that $G_\pi$ has a crossing at $I$, we define $\pi^s = \pi$.
    \item Otherwise, we perform the following process to obtain a permutation with a cross on $I$. Suppose $e = \{ u_1,u_2 \}$ and $f = \{ v_1,v_2 \}$, then under $\pi$ these edges are $\pi(e) = \{ \pi(u_1), \pi(u_2) \}$ and $\pi(f) = \{ \pi(v_1),\pi(v_2) \}$. Since they do not cross, without loss of generality we can assume that $\pi(u_1) < \pi(v_1) < \pi(v_2)<\pi(u_2)$ satisfies. Now, we choose a random vertex uniformly among the vertices of the edges of $I$. In case the vertex is $u_1$ or $v_1$, we leave these fixed and swap the positions between $\pi(v_2)$ and $\pi(u_2)$ and define $\pi^s$ as the resulting permutation. In case of choosing $u_2$ or $v_2$, we do the same process leaving these vertices fixed and exchanging $\pi(v_1)$ and $\pi(u_1)$. In this way we obtain a permutation $\pi^s$ such that it has a crossing at $I$
\end{itemize}
 Note that $\pi^s$ is a uniform random permutation conditional on the event that $\pi(u_i)$'s and $\pi (v_i)$'s are in alternating in the cyclic order. This in turn means that  $G_{\pi^s}$, is a uniform random embedding conditioned on the event that the event $I$ has a crossing.

In summary,  we obtain that 
$$X(G_{\pi^s}) = \sum_{i \in M_2(G)} \mathbb{I}_{\{ \pi^s(i) \text{ is a crossing in } G_{\pi^s} \}} = \sum_{i \in M_2(G)} Y_{\pi^s(i)},$$
satisfies $\{ Y_{\pi^s(j)} \}_{j \neq I} $ has the distribution of $\{ Y_{\pi(j)} \}_{j \neq I}$ conditional on $Y_{\pi(I)} =1$. Then, by Corollary \ref{corollary_sizebiasBernoulli}, we get that $X^s=X(G_{\pi^s}) $, has the size bias distribution of $X$.

Define $X^s = X(G_{\pi}^s)$ as the size bias transform of $X(G_\pi)$. By construction, $|X^s -X| \leq  2 \Delta (m-1)$. Indeed,  because in the worst case each of the edges incident to each vertex creates a new crossing.

\subsection{Bounding the Variance of $\mathbb{E}[X^s-X|X]$}
In order to use Theorem \ref{teoKolmo} we need a bound for the variance of a conditional expectation, which depends of $X$ and its size bias transform $X^s$. A bound for that variance is given in the next lemma, which is one of the main results in this paper.

\begin{lem}\label{lemmaVariance}
Let $G = (V,E)$ a graph with maximum degree $\Delta$, and let $X$ be the number of crossings of a random uniform embedding of $G$. Then 
\begin{equation}\label{bound_variance}
    \mathrm{Var}(\mathbb{E}[X^s- X|X]) \leq 4\Delta^2 (m-1)^2 \left( 1- \frac{6m_4(G)}{m_2(G)^2} + \frac{(\Delta-1)^2(m-4)}{2m_2(G)} \right),
\end{equation}
where $X^s$ is the size bias transform of $X$, $m$ is the number of edges of the graph $G$ and $m_r(G)$ is the number of $r$-matchings of the graph $G$.
\end{lem}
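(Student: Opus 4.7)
The plan is to start from the standard reduction $\mathrm{Var}(\mathbb{E}[X^s - X \mid X]) \leq \mathrm{Var}(X^s - X)$, which follows from Jensen's inequality applied through the tower property. Writing $D := X^s - X$, this rearranges as $\mathrm{Var}(D) = \mathbb{E}[D^2] - (\mathbb{E}[D])^2$, and the defining size-bias identity yields $\mathbb{E}[D] = \sigma^2/\mu$ with $\mu = m_2(G)/3$ from Lemma \ref{lem:mean}. This handles the second term exactly; everything reduces to a sufficiently sharp estimate of $\mathbb{E}[D^2]$.

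The heart of the proof is to refine the crude envelope $\mathbb{E}[D^2] \leq A^2 = 4\Delta^2(m-1)^2$ coming from the bound $|D| \leq 2\Delta(m-1)$. I would exploit the coupling explicitly: $D$ vanishes on $\{Y_I = 1\}$, and otherwise $\pi^s$ differs from $\pi$ in only two positions. Decomposing $D = \sum_{j \in M_2(G)} (Y_j(\pi^s) - Y_j(\pi))$, only 2-matchings $j$ containing one of those swapped vertices can contribute. Squaring and taking expectation converts $\mathbb{E}[D^2]$ into a double sum over pairs $(j, k)$ of affected 2-matchings, to be classified according to how they overlap with each other and with the random index $I$ (following the classes $C_i$ of Figure \ref{double_sum}).

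The subtraction $-A^2 \cdot 6 m_4(G)/m_2(G)^2$ has a clean combinatorial origin: $6 m_4/m_2^2$ is the probability that two independent uniformly random 2-matchings are vertex-disjoint, since each $4$-matching splits into an ordered pair of disjoint 2-matchings in $\binom{4}{2} = 6$ ways. The vertex-disjoint contributions to $\mathbb{E}[D^2]$ match $(\mathbb{E}[D])^2 = \sigma^4/\mu^2$, so they cancel and leave only the pairs sharing at least one vertex with $I$, contributing at most $A^2(1 - 6 m_4/m_2^2)$. The residual term $A^2 \cdot (\Delta-1)^2(m-4)/(2 m_2(G))$ arises from the finer configurations in which an affected 2-matching shares exactly one vertex with $I$: after fixing that shared vertex, the degree bound leaves at most $\Delta - 1$ choices for the incident edge, at most $m - 4$ disjoint edges can complete the 2-matching, and the $2 m_2$ comes from the uniform choice of $I$. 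The principal obstacle will be the meticulous bookkeeping of these overlap classes and verifying that the degenerate configurations (pairs sharing an edge or more than one vertex) either vanish from the estimate or are absorbed into the $(\Delta-1)^2(m-4)/(2m_2)$ correction; once the combinatorial classes are handled separately, the bound follows from $|Y_j(\pi^s) - Y_j(\pi)| \leq 1$ termwise and a direct reassembly.
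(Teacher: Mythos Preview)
Your opening step $\mathrm{Var}(\mathbb{E}[X^s-X\mid X])\le\mathrm{Var}(X^s-X)$ discards precisely the averaging that makes the bound work, and $\mathrm{Var}(D)$ is genuinely too large to recover \eqref{bound_variance}. Since $D=0$ whenever $Y_I=1$ (an event of probability $1/3$) while $\mathbb{E}[D]=\sigma^2/\mu>0$, Cauchy--Schwarz gives $\mathbb{E}[D^2]\ge\tfrac{3}{2}(\mathbb{E}[D])^2$ and hence $\mathrm{Var}(D)\ge\tfrac{1}{2}(\sigma^2/\mu)^2$; for the cycle $C_n$ this lower bound is of order $n^2$, whereas the right-hand side of \eqref{bound_variance} is of order $n$. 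Your subsequent double sum over affected pairs $(j,k)$ cannot repair this: those $j,k$ are constrained to share a vertex with $I$, so they are not two uniform $2$-matchings, and the claimed identification of the ``vertex-disjoint contributions'' with $(\mathbb{E}[D])^2$ has no justification in that framework.

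The paper instead keeps the averaging over $I$ inside the conditional expectation: writing $\mathbb{E}[X^s-X\mid \pi]=m_2(G)^{-1}\sum_{i\in M_2(G)}(X^{(i)}-X)$ turns the variance into the double sum $m_2(G)^{-2}\sum_{i,j}\mathrm{Cov}(X^{(i)}-X,\,X^{(j)}-X)$. Now the combinatorial reading you proposed really does apply, but to the \emph{index pair} $(i,j)$ rather than to summands of $D$: the $6m_4(G)$ ordered pairs with $V(i)\cap V(j)=\emptyset$ are exactly the ordered splittings of $4$-matchings, and among these, whenever additionally $N(i)\cap N(j)=\emptyset$ the random variables $X^{(i)}-X$ and $X^{(j)}-X$ are independent, so the covariance vanishes. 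What survives are the pairs with $V(i)\cap V(j)\ne\emptyset$ (there are $m_2(G)^2-6m_4(G)$ of them) together with the vertex-disjoint pairs whose edge-neighbourhoods meet (at most $m_2(G)(\Delta-1)^2(m-4)/2$, essentially by the construction you sketched for the residual term), and bounding each surviving covariance by $4\Delta^2(m-1)^2$ gives \eqref{bound_variance}. The missing idea in your plan is this double sum over index pairs and the independence argument that kills most of it; passing to $\mathrm{Var}(D)$ destroys that structure.
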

\begin{proof}
Note that 
\begin{align*}
\mathbb{E}[X^s - X | X] &=  \sum_{i \in M_2(G) } \mathbb{E}[X^s - X | X, I = i] \mathbb{P}(I = i) \\
&= \frac{1}{m_2(G)} \sum_{i \in M_2(G)} (X^{(i)} - X),
\end{align*}
where $X^{(i)}$ denote $X^s$ conditioned to have a crossing in the index $i$. This gives that 
\begin{equation}
    \mathrm{Var}(\mathbb{E}[X^s- X|X]) = \frac{1}{m_2(G)^2} \sum_{i,j \in M_2(G)} \mathrm{Cov}(X^{(i)}-X, X^{(j)}- X).
    \label{variance_covariancesum}
\end{equation}
We identify two kinds of terms in the summation of the covariance, when the indices satisfy $V(i) \cap V(j) \neq \emptyset$ or when they satisfy $V(i) \cap V(j) = \emptyset$, where $V(i)$ denote the set of vertices of the 2-matching $i$.

    \textbf{Case $V(i) \cap V(j) \neq \emptyset$}: In this case, we have that
    $$|\{i,j \in M_2(G) : V(i) \cap V(j) \neq \emptyset \}| = m_2(G)^2-\binom{4}{2} m_4(G) = m_2(G)^2- 6 m_4(G).$$ 
From the construction of the size bias transform, we have that $|X^{(i)}-X| \leq 2 \Delta (m-1)$. Indeed, if $X^s$ have a crossing in the matching $i$, there are two options for the matching $i$ in $X$, that is, it is a crossing or not. If $i$ is a crossing, then $X^{(i)} = X$,  because we don't need to crossing any edges. On the other hand, if $i$ is not a crossing of $X$, then to obtain $X^{(i)}$ it is necessary crossing the edges of the matching $i$, which can be generate at least a new crossing for each of the edges incidents to the four vertices of $i$. Then, an upper bound for the variance given by

    \begin{equation}\label{upperboundvariance}
        \mathrm{Var}(X^{(i)}-X)  \leq \mathbb{E}[(X^{(i)}-X)^2]\leq  4\Delta^2(m-1)^2.
    \end{equation}
    Then, the contribution in the sum \eqref{variance_covariancesum} of the 2-matchings such that $V(i) \cap V(j) \neq \emptyset$  is bounded by
    $$\frac{m_2(G)^2- 6m_4(G)}{m_2(G)^2} 4 \Delta^2(m-1)^2 = \left( 1-\frac{6m_4(G)}{m_2(G)^2} \right)4 \Delta^2(m-1)^2.$$

    \textbf{Case $V(i) \cap V(j) = \emptyset$}: In this case, we have that
    $$|\{i,j \in M_2(G) : V(i) \cap V(j) = \emptyset \}| = \binom{4}{2} m_4(G) = 6 m_4(G).$$
    Let $N(i)$ be the set of edges incidents to the vertices of the 2-matching $i$. We can divide the sum over the 2-matching with $V(i) \cap V(j) = \emptyset$. In the case of $N(i) \cap N(j) = \emptyset$, we obtain that the random variables $X^{(i)} -X$ and $X^{(i)} -X$ are independent. Indeed, from the construction $X^{(i)} -X$ only depends on the edges incident to the $2$-matching $i$, and similarly $X^{(j)} -X$ only depends on the edges incident to the $2$-matching $j$. Hence, in this case we obtain that $\mathrm{Cov}(X^{(i)}-X, X^{(j)}- X) =0$.
    
    So we are interested in the pairs of $2$- matchings such that $V(i)\cap V(j)=\emptyset$, but  $N(i) \cap N(j) \neq \emptyset$. An upper bound for the number of such pairs of $2$-matchings is  given by $m_2(G)(\Delta-1)^2 (m-4)/2 $.
    
    Indeed, in this case, there exists at least one edge between $V(i)$ and  $V(j)$.  So, to obtain such configuration one may proceed as follows. First, one chooses a $2$-matching, $i$, and one considers one of the $4$ vertices in $i$, say $v$, and looks for a neighbour of $v$, say $w$, which should be in the $2$-matching $j$. There are at most $\Delta-1$ choices for $w$. Now, to construct $j$, we need to find a neighbour of $w$ which is not $v$ for one of the edges forming $j$, giving at most $\Delta-1$ possibilities and another edge which cannot be in $i$, or contain $w$, giving at most $m-4$ possibilities.  Putting this together and considering double counting, we obtain the desired $m_2(G)(\Delta-1)^2(m-4)/2$. See Figure \ref{fig} for a diagram explaining this counting.
    
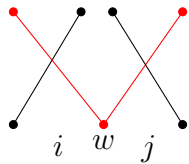
\begin{figure}[ht]
\centering 
\begin{tikzpicture}[scale = 1.5]
\draw (0,0)  -- (0.6,1) ;
\filldraw (0,0) circle (1 pt);
\filldraw (0.6,1) circle (1 pt);
\draw [red] (0,1) -- (0.8,0) ;
\filldraw [red] (0,1) circle (1 pt);
\filldraw [red] (1.5,1) circle (1 pt);
\filldraw [red] (0.8,0) node[black][below]{$w$} circle (1 pt);
\draw [red] (0.8,0) -- (1.5,1);
\draw (0.88,1)  -- (1.5,0) ;
\filldraw (0.88,1) circle (1 pt);
\filldraw (1.5,0) circle (1 pt); 
\filldraw (0.4,0) node[black][below]{$i$} circle (0 pt);
\filldraw (1.2,0) node[black][below]{$j$} circle (0 pt);
\end{tikzpicture}
\caption{A generic pair of $2$-matchings sharing a vertex.}
\label{fig}
\end{figure}

   Finally, using the upper bound for the variance given in \eqref{upperboundvariance}, we obtain
    \begin{align*}
        \frac{1}{m_2(G)^2} \sum_{\substack{i,j\in M_2(G)\\ V(i) \cap V(j) = \emptyset}} \mathrm{Cov}(X^{(i)}-X, X^{(j)}- X) &= \frac{1}{m_2(G)^2} \sum_{\substack{i,j\in M_2(G)\\N(i) \cap N(j) \neq \emptyset}} \mathrm{Cov}(X^{(i)}-X, X^{(j)}- X) \\
        &\leq \frac{1}{m_2(G)^2}\sum_{\substack{i,j\in M_2(G)\\N(i) \cap N(j) \neq \emptyset}} 4 \Delta^2 (m-1)^2 \\
        &\leq \frac{1}{m_2(G)}2\Delta^2(\Delta-1)^2 (m-1)^2 (m-4). 
    \end{align*}
    Thus, the contribution of the pairs of  $2$-matchings such that $V(i) \cap V(j) = \emptyset$ in the covariance sum \eqref{variance_covariancesum} is bounded by 
    $$\frac{2 \Delta^2(\Delta-1)^2 (m-1)^2 (m-4)}{m_2(G)}. $$

Therefore, 
$$\mathrm{Var}(\mathbb{E}[X^s- X|X]) \leq 4\Delta^2 (m-1)^2 \left( 1- \frac{6m_4(G)}{m_2(G)^2} + \frac{(\Delta-1)^2(m-4)}{2m_2(G)} \right).$$

\end{proof}

\subsection{Kolmogorov Distance}

Using the previous results, we are in position to apply Theorem \ref{teoKolmo}. Therefore, we can obtain a bound for the Kolmogorov distance of the (normalized) number of crossings and a standard Gaussian random variable.

\begin{teo}\label{teo:kolmo}
Let $G$ be a graph with maximum degree $\Delta$, and let $X$ be the number of crossings of a random uniform embedding of $G$. Let $\mu$ and $\sigma^2$ the mean and the variance of $X$. Then, with  $W = (X-\mu) /\sigma$,  
$$d_{Kol}(W,Z) \leq \frac{4 m_2(G) \Delta m}{3 \sigma^2}\left[\frac{6 \Delta m}{\sigma} + \sqrt{1- \frac{6m_4(G)}{m_2(G)^2} + \frac{(\Delta-1)^2m}{2m_2(G)} } \right],$$
where $m$ is the number of edges of $G$, $m_r(G)$ is the number of $r$-matchings of $G$, and $Z$ is a standard Gaussian random variable.
\end{teo}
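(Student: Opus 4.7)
The plan is to invoke Theorem \ref{teoKolmo} and substitute the three ingredients already assembled in the preceding subsections: the mean $\mu = m_2(G)/3$ from Lemma \ref{lem:mean}, the almost-sure bound $|X^s - X| \leq 2\Delta(m-1)$ coming from the explicit size-bias coupling constructed in Section 4.1, and the estimate on $\mathrm{Var}(\mathbb{E}[X^s - X \mid X])$ proved in Lemma \ref{lemmaVariance}. Since everything needed is already in place, the argument is essentially a calculation rather than a new idea.

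First I would recall Theorem \ref{teoKolmo}, which says that for $W = (X-\mu)/\sigma$,
\begin{equation*}
d_{Kol}(W,Z) \leq \frac{6\mu A^2}{\sigma^3} + \frac{2\mu \Psi}{\sigma^2},
\end{equation*}
whenever $|X^s - X| \leq A$ and $\Psi = \sqrt{\mathrm{Var}(\mathbb{E}[X^s - X \mid X])}$. The size-bias construction in Section 4.1 allows me to take $A = 2\Delta(m-1)$, which I weaken to $A \leq 2\Delta m$ so that the final bound matches the statement. Lemma \ref{lem:mean} provides $\mu = m_2(G)/3$, and Lemma \ref{lemmaVariance} together with the trivial estimate $m-4 \leq m$ gives
\begin{equation*}
\Psi \leq 2\Delta m \sqrt{1 - \frac{6 m_4(G)}{m_2(G)^2} + \frac{(\Delta-1)^2 m}{2 m_2(G)}}.
\end{equation*}

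Next I would substitute these three ingredients into the Chen--Goldstein--Shao bound. The first term becomes
\begin{equation*}
\frac{6\mu A^2}{\sigma^3} \leq \frac{6 \cdot \frac{m_2(G)}{3} \cdot 4\Delta^2 m^2}{\sigma^3} = \frac{4 m_2(G)\Delta m}{3\sigma^2} \cdot \frac{6 \Delta m}{\sigma},
\end{equation*}
after factoring out $4 m_2(G) \Delta m/(3\sigma^2)$. The second term becomes
\begin{equation*}
\frac{2\mu \Psi}{\sigma^2} \leq \frac{2 \cdot \frac{m_2(G)}{3}}{\sigma^2} \cdot 2\Delta m \sqrt{1 - \frac{6 m_4(G)}{m_2(G)^2} + \frac{(\Delta-1)^2 m}{2 m_2(G)}} = \frac{4 m_2(G) \Delta m}{3\sigma^2} \sqrt{1 - \frac{6 m_4(G)}{m_2(G)^2} + \frac{(\Delta-1)^2 m}{2 m_2(G)}}.
\end{equation*}
Summing these two contributions and pulling the common factor $4 m_2(G) \Delta m/(3\sigma^2)$ outside produces exactly the right-hand side in the statement.

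There is no real obstacle here: all the delicate combinatorics lives in Lemma \ref{lemmaVariance}, and the size-bias coupling was already shown to be bounded by $2\Delta(m-1)$ in the construction. The only minor nuisance is bookkeeping: tracking the $m$ versus $m-1$ versus $m-4$ slack and verifying that the two terms in Theorem \ref{teoKolmo} share a common prefactor after the substitution, so that the bound can be written in the compact factored form displayed in the theorem.
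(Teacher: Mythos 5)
Your proposal is correct and follows essentially the same route as the paper: apply Theorem \ref{teoKolmo} with $A = 2\Delta(m-1) \leq 2\Delta m$, $\mu = m_2(G)/3$ from Lemma \ref{lem:mean}, and the bound on $\Psi$ from Lemma \ref{lemmaVariance} (relaxing $m-4$ to $m$), then factor out the common term $4m_2(G)\Delta m/(3\sigma^2)$. The arithmetic in both terms checks out, so there is nothing to add.
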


\begin{proof}
By Lemma \ref{lem:mean} we have that $\mu =m_2(G) /3$, also by Lemma \ref{lemmaVariance}, $\Psi$ defined in \eqref{Psi} is bounded as follows,
$$\Psi \leq 2\Delta m \sqrt{  1- \frac{6m_4(G)}{m_2(G)^2} + \frac{(\Delta-1)^2 m}{2m_2(G)} }.$$
Then, using Theorem \ref{teoKolmo}, and the fact that $|X^s -X| \leq A =   2\Delta m$, we obtain
\begin{align*}
    d_{Kol}(W,Z) &\leq  \frac{6 \mu A^2}{\sigma^3} +\frac{2 \mu \Psi}{\sigma^2} \\
    &\leq \frac{8\Delta^2 m_2(G) m^2}{\sigma^3} + \frac{4 m_2(G) \Delta m}{3\sigma^2}  \sqrt{  1- \frac{6m_4(G)}{m_2(G)^2}+ \frac{(\Delta-1)^2m}{2m_2(G)}}\\
    &= \frac{4m_2(G)\Delta m}{3 \sigma^2} \left( \frac{6 \Delta m}{\sigma}  + \sqrt{  1- \frac{6m_4(G)}{m_2(G)^2}+ \frac{(\Delta-1)^2m}{2m_2(G)}}\right)
\end{align*}

\end{proof}

\section{Some Examples}

In this section we provide various examples for which Theorem \ref{teo:kolmo} can be applied directly. To show its easy applicability, we give explicit bounds on the quantities appearing in \eqref{kolmogorov_distance}.

\subsection{Pairing}

Let $G$ be a pairing or matching graph on $2n$ vertices, that is, a disjoint union of $n$ copies of $K_2$, as in Example \ref{pairing_example}. In particular, $m=n$,  $m_2(G)=\binom{n}{2}$ and $m_4(G)=\binom{n}{4}$ and the variance is given by $$\sigma^2=\frac{n(n-1)(n+3)}{45}$$ which is bigger that $n^3/45$ for $n>3$.
On the other hand, since $\Delta=1$, we see that, for $n>3$,
$$1-\frac{6 m_4(G)}{m_2(G)^2}+\frac{m(\Delta-1)^2}{2m_2(G)}=1-\frac{6 m_4(G)}{m_2(G)^2} =1- \frac{6 \binom{n}{4}}{\binom{n}{2}^2} = \frac{4n-6}{n^2-n} =\frac{4}{n} -\frac{2}{n^2-n}<\frac{4}{n}.$$
Thus, 
$$d_{Kol}(W,Z) \leq \frac{4\cdot 45 n^3}{3\cdot 2 n^{3}}\left(\frac{6\cdot \sqrt{45} n}{n^{3/2}} + \frac{2}{\sqrt{n}} \right)\leq \frac{1268}{\sqrt{n} }.$$

\subsection{Path Graph} 

Let $P_n$ be the path graph on $n$ vertices. From Example \ref{path_example}, $m_2(P_n)=\binom{n-2}{2}$ and $m_4(P_n)=\binom{n-4}{4}$, and we obtain that
$$1-\frac{6 m_4(P_n)}{m_2(P_n)^2} = 1-\frac{6 \binom{n-4}{4}}{\binom{n-2}{2}^2}  =  \frac{2(6n^3-71n^2+289n-402)}{n^4- 10 n^3 + 37 n^2- 60 n+36}=\frac{12}{n}+o(n^{-1})$$
On the other hand, $\Delta=2$, and then, one easily sees that,
$$4 m_2(P_n) \Delta m\leq 4n^3,\qquad 6\Delta m\leq 12n,\quad \text{ and }\quad \frac{m(\Delta-1)^2}{2m_2(P_n)}=\frac{1}{(n-2)}.$$
Finally, since the variance is given by $\sigma^2 =  n^3/45-n^2/18 - 11 n/45 -2/3>n^3/60$ , for $n\geq 14$, we get 
$$d_{Kol}(W,Z) \leq \frac{4 \cdot 60 n^3}{3 n^{3}}\left(\frac{12\cdot \sqrt{60} n}{n^{3/2}} + \frac{4}{\sqrt{n}} \right)\leq \frac{7757}{\sqrt{n} }.$$

\subsection{Cycle Graph}

Let $C_n$ be the cycle graph on $n$ vertices. In this case $m=n$ and $\Delta = 2 $, and from Example \ref{cycle_example}, $m_2(C_n)=\frac{n(n-3)}{2}$ and $m_4(C_n)=\frac{n}{4}\binom{n-5}{3}$, then
$$1-\frac{6 m_4(C_n)}{m_2(C_n)^2} +\frac{(\Delta-1 )^2 m}{2m_2(C_n)}= 1- \frac{6\frac{n}{4} \binom{n-5}{3}}{\left(\frac{n(n-3)}{2} \right)^2} + \frac{n}{n(n-3)} =  \frac{13n^2-101n+210}{n(n-3)^2} \leq \frac{13}{n},\text{ for } n\geq 5.$$
Also, $4m_2(C_n) \Delta m \leq 4n^3$, and $6 \Delta m = 12 n$. Since the variance is  $\sigma^2 =  n^3/45 - n^2/90 -n/3 >n^3/50, $ for $n\geq 15$, we obtain 
$$d_{Kol}(W,Z) \leq \frac{4 \cdot 50 n^3}{3 n^{3}}\left(\frac{12 \sqrt{50} n}{n^{3/2}} + \frac{\sqrt{13}}{\sqrt{n}} \right)\leq \frac{{5898}}{\sqrt{n} }.$$

\subsection{Disjoint Union of Triangles} Consider $n$ copies of $K_3$ and let $G$ be the disjoint union of them. From Example \ref{triangle_example} we have that $G$ is a graph with $3n$ vertices, $m=3n$ edges, maximum degree $\Delta =2$, $m_2(G)=3^2\binom{n-2}{2}$ and $m_4(G)=3^4\binom{n-4}{4}$, then
$$1-\frac{6 m_4(G)}{m_2(G)^2} +\frac{(\Delta-1 )^2 m}{2m_2(G)}= 1- \frac{6 \left(3^4\binom{n}{4}\right)}{\left( 3^2\binom{n}{2}\right)^2}+\frac{3n}{2\cdot 3^2 \binom{n}{2}} = \frac{13n-18}{3n(n-1)} \leq \frac{13}{3n}.$$
On the other hand, we can obtain that, $4m_2(G) \Delta m \leq 108 n^3$ and $6 \Delta m = 36n$. Finally, since the variance is $\sigma^2 = 3n^3/5+3n^2/10-9n/10 > 3n^3/5,$ for $n \geq 3$, then we get
$$d_{Kol}(W,Z) \leq \frac{108 \cdot 5 n^3}{9 n^{3}}\left(\frac{36 \sqrt{5} n}{\sqrt{3} n^{3/2}} + \frac{\sqrt{13}}{\sqrt{2n}} \right)\leq \frac{2942}{\sqrt{n} }.$$

\section{Another Possible Limit}

The following shows that not every sequence of graphs satisfies a central limit theorem for the number of crossings, even if the variance is not always $0$ and that having $m_2$ going to infinity is not enough.  Moreover, it shows that we can have another type of limit for the number of crossings.

Consider the graph $G_n$ which consists of a star graph with $n-1$ vertices for which an edge is attached at one of the leaves, as in Figure \ref{kite}a. 

Note that in this case $m_2=n-3$ and $m_4 = 0$ and the only other term appearing in \eqref{EXX} is  $S_7$, which for this graph equals $\binom{n-3}{2}$ . This implies that $$\mathbb{E}(X)=\frac{n-3}{3}, \quad \mathbb{E}(X^2)=\frac{(n-2)(n-3)}{6},$$
from where $\sigma^2=n(n-3)/18$,  $1-6m_4/m_2^2=1$ and 
$$\frac{(\Delta-1)^2m}{2m_2} = \frac{(n-2)^2(n-1)}{2(n-3)} \approx \frac{n^2}{2}.$$
Thus the right hand of \eqref{kolmogorov_distance} does not approximate $0$ as $n\to \infty$.

One can calculate explicitly the probability of having $k$ crossings. Indeed, lets us denote by $v_0$ is the center and by $v_n$, the tail (the only vertex at distance 2 from $v_0$) and by $v_{n-1}$ the vertex which has $v_0$ and $v_n$. The number of crossings in an embedding of $G_n$ depends only on the position of this three vertices. More precisely, there will be exactly $k$ crossing if the following two conditions are satisfied (see Figure 5b for an example):
\begin{enumerate}
\item There are exactly $k$ and $n-2-k$ vertices in the two arcs that remain when removing $v_n$ and $v_{n-1}$.
\item $v_0$ is in the arc with $n-2-k$ vertices.
\end{enumerate}

\begin{figure}[ht]
\centering 
\begin{tikzpicture}
[mystyle/.style={scale=0.8, draw,shape=circle,fill=black}]
\def\ngon{10}
\node[regular polygon,regular polygon sides=\ngon,minimum size=2.5cm] (p) {};
\foreach\x in {1,...,\ngon}{\node[mystyle] (p\x) at (p.corner \x){};}
\node[mystyle] (p0) at (0,0) {};
\foreach\x in {1,...,\ngon}
{
 \draw[thick] (p0) -- (p\x);
}
\node[mystyle] (p00) at (-3.2,0) {};
{
 \draw[thick] (p00) -- (p4);
}

\node [label=below:a) $G_{12}$] (*) at (0,-2) {};
 \end{tikzpicture}
\qquad \qquad
\begin{tikzpicture}
[mystyle/.style={scale=0.8, draw,shape=circle,fill=black}]
\def\ngon{10}
\node[regular polygon,regular polygon sides=\ngon,minimum size=3cm] (p) {};
\foreach\x in {1,...,\ngon}{\node[mystyle] (p\x) at (p.corner \x){};}

\foreach\x in {5,...,\ngon}
{
 \draw[thick] (p1) -- (p\x);
}

\foreach\x in {1,...,3}
{
 \draw[thick] (p1) -- (p\x);
}

\draw[thick] (p4) -- (p8);

\node [label=below: $v_{0}$] (*) at (1,2) {};
\node [label=below: $v_{n-1}$] (*) at (1.8,-.4) {};
\node [label=below: $v_{n}$] (*) at (-1.9,0) {};

\node [label=below:b) $G_{10}$] (*) at (0,-2) {};
 \end{tikzpicture}

\caption{a) $G_{12}$ drawn without crossing (left). b) $G_{10}$ drawn in convex position with $3$ crossings formed by the edge $(v_{n-1},v_{n})$ and the edges $(v_0,v_i)$ where $v_i$ are between $v_{n-1}$ and $v_{n}$ in the cyclic order.} \label{kite}

\end{figure}
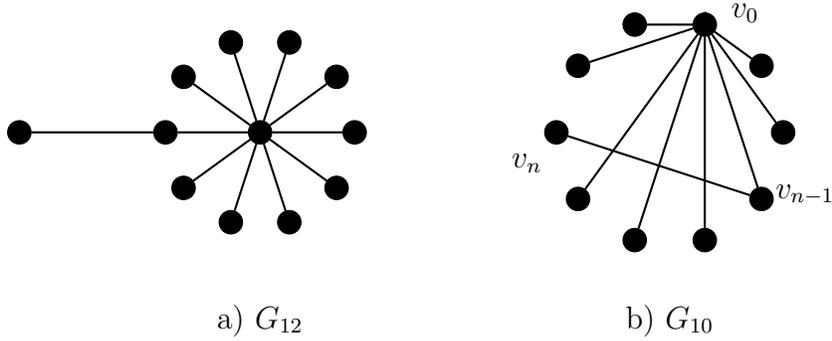

By a simple counting argument, since all permutations have the same probability of occurrence, one sees that such conditions are be satisfied with probability
$$\mathbb{P}(X_n=k)=\frac{2(n-2-k)}{(n-1)(n-2)}, \qquad k=0,\ldots,n-2.$$
Finally,
dividing by $n$, the random variable $Y_n=X_n/n$  satisfies that
$$\mathbb{P}\left(Y_n=\frac{k}{n}\right)=\frac{2(n-2-k)}{(n-1)(n-2)}\approx\frac{2}{n}\left(1-\frac{k}{n}\right), \qquad k=0,\ldots,n-2,$$
which implies that $Y_n\to Y$, weakly, where $Y$ is a random variable supported on $(0,1)$ with density $f_Y(x)=2(1-x).$

\subsection*{Acknowledgement}

We would like to thank Professor Goldstein for pointing out the paper \cite{paguyo2021convergence} and for various communications during the preparation of this paper. OA would like to thank Clemens Huemer for initial discussions on the topic that led to work in this problem.

Santiago Arenas-Velilla was supported by a scholarship from CONACYT.

Octavio Arizmendi was supported by CONACYT Grant CB-2017-2018-A1-S-9764.
\\
{\begin{minipage}[l]{0.3\textwidth} \includegraphics[trim=10cm 6cm 10cm 5cm,clip,scale=0.15]{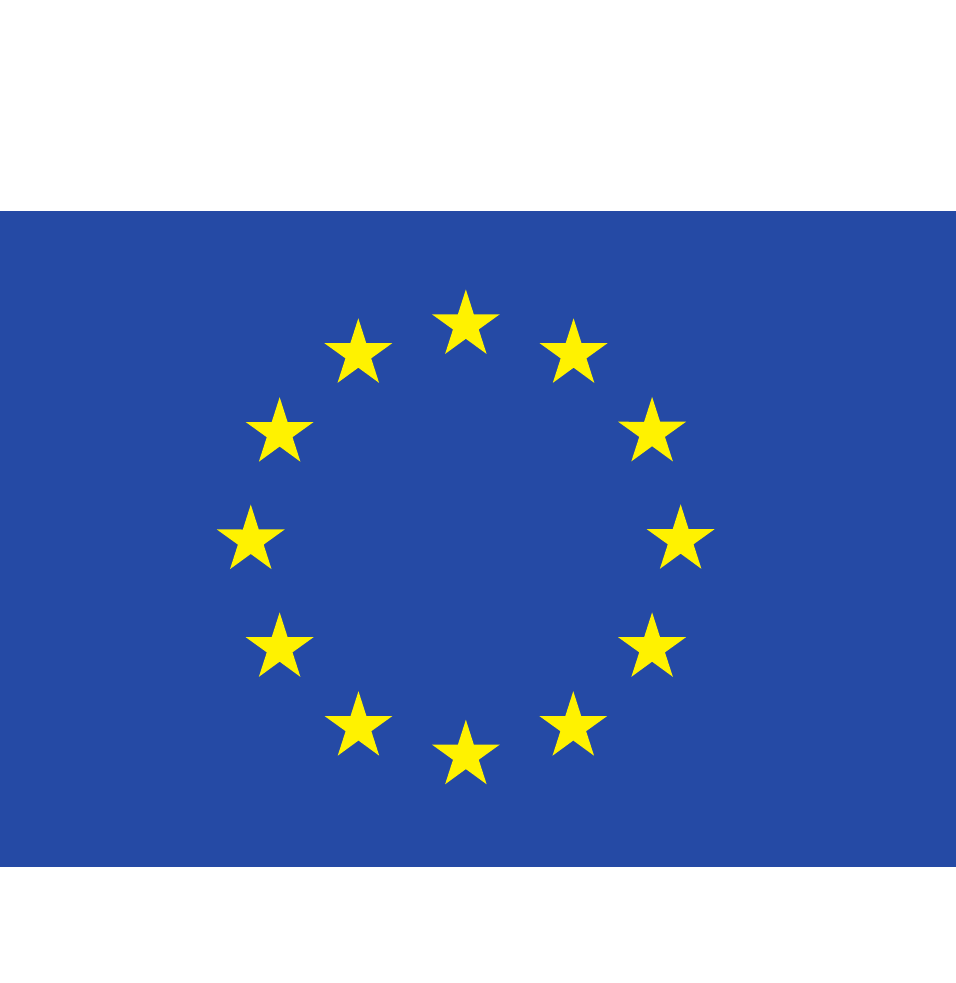} \end{minipage}
 \hspace{-3.2cm} \begin{minipage}[l][1cm]{0.82\textwidth}
 	  This project has received funding from the European Union's Horizon 2020 research and innovation programme under the Marie Sk\l{}odowska-Curie grant agreement No 734922.
		 	\end{minipage}}

\bibliographystyle{plain}
\bibliography{references}

\end{document}